\definecolor{darkblue}{rgb}{0,0,0.7}
\definecolor{darkred}{rgb}{0.7,0,0}
\newtheorem{proposition}{Proposition}[section]
\newtheorem{theorem}[proposition]{Theorem}
\newtheorem{lemma}[proposition]{Lemma}
\newtheorem{corollary}[proposition]{Corollary}
\newtheorem{question}[proposition]{Question}
\newtheorem{conjecture}[proposition]{Conjecture}
\newtheorem{remark}[proposition]{Remark}
\newtheorem{definition}[proposition]{Definition}
\newenvironment{customthm}[1]
{\innercustomthm\itshape}
{\endinnercustomthm}
\newcommand{\G}{{\mathcal{G}}}
\def\H{\mathcal{H}}
\tikzstyle{place}=[draw,circle,minimum size=1mm,inner sep=1pt,outer sep=-1.1pt,fill=black]
\tikzstyle{places}=[draw,rectangle,minimum size=8pt,inner sep=0pt]
\tikzstyle{placesf}=[draw,rectangle,minimum size=5pt,inner sep=0pt]
\tikzstyle{placec}=[draw,circle,minimum size=8pt,inner sep=0pt]
\tikzstyle{placecf}=[draw,circle, minimum size=5pt,inner sep=0pt]
\def\H{\mathcal{H}}
\def\K{\mathbb{K}}
\def\G{\mathcal{G}}
\def\p{\mathfrak {p}}
\def\E{\mathcal{E}}
\def\l{\langle}
\def\ha{\widehat}
\def\til{\widetilde}
\def\r{\rangle}
\def\x{\mathbf x}
\def\height{\mathrm{ht}}
\title[Ordinary and symbolic equality]{Equality of ordinary and symbolic powers and the Conforti-Cornu\'ejols conjecture for $(n-2)$-uniform clutters}
\author{Amit Roy}
\address{Chennai Mathematical Institute, India}
\email{amitiisermohali493@gmail.com}
\author{Kamalesh Saha}
\address{Department of Mathematics, SRM University-AP, Amaravati 522240, Andhra Pradesh, India}
\email{kamalesh.saha44@gmail.com; kamalesh.s@srmap.edu.in}
\thanks{}
\keywords{symbolic powers, ordinary powers, Conforti–Cornu\'ejols conjecture, complementary edge ideals, packing property, MFMC property, clutters}
\subjclass[2020]{Primary: 13C05, 13F55, 05C70; Secondary: 13P25, 05E40, 05C75}
\begin{document}
	
	\begin{abstract}
		 Let $I$ be an equigenerated squarefree monomial ideal in the polynomial ring $\mathbb{K}[x_1,\ldots,x_n]$, and let $\mathcal{H}$ be a uniform clutter on the vertex set $\{x_1,\ldots,x_n\}$ such that $I=I(\mathcal{H})$ is its edge ideal. A central and challenging problem in combinatorial commutative algebra is to classify all clutters $\mathcal{H}$ for which $I(\mathcal{H})^{(k)} = I(\mathcal{H})^{k}$ for a fixed positive integer $k$, where $I(\mathcal{H})^{(k)}$ denotes the $k^{\text{th}}$ symbolic power of $I(\mathcal{H})$. 
		
		In this article, we give a complete solution to this problem for $(n-2)$-uniform clutters. Moreover, we provide a simple combinatorial classification of all $(n-2)$-uniform clutters having the packing property. As a consequence, we confirm the celebrated Conforti–Cornuéjols conjecture for $(n-2)$-uniform clutters. We also compare our results with the known families of clutters for which the conjecture is known to be true. Finally, we present an application of our results to the theory of Linear Programming duality problems.
	\end{abstract}
	
	\maketitle
	
	\section{Introduction}\label{intro}
	The study of symbolic powers of ideals has historically played a central role in commutative algebra. For instance, symbolic powers featured prominently in the classical proof of Krull's principal ideal theorem, and they were also used in the proof of the Hartshorne–Lichtenbaum vanishing theorem. More generally, symbolic powers capture the intricate algebraic and geometric constraints imposed by the minimal primes of an ideal, revealing subtleties that ordinary powers alone cannot detect. A central problem in this area concerns understanding the containment relations between symbolic and ordinary powers. In particular, significant attention has been devoted to characterizing ideals for which symbolic powers coincide with ordinary powers. This question has been extensively studied due to its deep connections with combinatorics, homological invariants, and the geometry of the underlying varieties (see, for instance, \cite{DaoDeGriHuBet2018, SzembergSzpond2017}).
	
	In the case of squarefree monomial ideals, the study of symbolic powers is closely related to the so-called packing problem. Let $I$ be a squarefree monomial ideal in the polynomial ring $S=\mathbb K[x_1,\ldots,x_n]$, where $\K$ is a field. Let $\H$ denote a clutter with vertex set $V(\H)=\{x_1,\ldots,x_n\}$ and edge set $\E(\H)$. The ideal $I$ can be expressed as the edge ideal 
	\[
	I(\H)=\langle \x_{\E}:=\prod_{x_i\in \E}x_i\mid \E\in E(\H)\rangle.
	\]
	We say that $\H$ satisfies the \emph{K\"onig property} if the height of $I(\H)$ equals the maximum number of independent edges of $\H$ (known as the {\it matching number} of $\H$). A clutter $\H$ is said to satisfy the \emph{packing property} if all its minors satisfy the K\"onig property. 
	
	In 1993, Conforti and Cornu\'ejols~\cite{ConfortiCornuejols1993} proposed a conjecture asserting that a clutter satisfies the packing property if and only if it satisfies the \emph{max-flow min-cut} (MFMC) property. Recall that a clutter $\H$ is said to satisfy the MFMC property if both sides of the Linear Programming duality equation 
	\[
	\min\{\alpha\cdot x\mid Ax\ge\mathbf 1\}=\max\{y\cdot \mathbf 1\mid yA\le\alpha,y\in\mathbb N^m\}
	\]
    have integral optimum solutions $x$ and $y$ for each nonnegative integral vector $\alpha$, where $\mathbf 1$ is the column vector with all entries $1$, and $A$ is the (edge-vertex) incidence matrix of $\H$ with $|V(\H)|=n$ and $|E(\H)|=m$.
	\color{black}
	 The Conforti--Cornu\'ejols conjecture, together with the underlying combinatorial notions, was subsequently translated into the framework of commutative algebra by Gitler, Valencia, and Villarreal~\cite{Gitler2007Rees}. In particular, the commutative algebraic formulation of the conjecture, as stated in~\cite[Conjecture~3.10]{Gitler2007Rees} (see also~\cite{grv2009}), is the following.

	\begin{conjecture}[Conforti–Cornu\'ejols Conjecture]\label{packing conjecture}
		Let $\mathcal{H}$ be a clutter, and let $I(\H)$ denote its edge ideal. Then the following statements are equivalent:
		\begin{enumerate}[label=(\roman*)]
			\item $I(\H)^{(k)} = I(\H)^k$ for all $k \ge 1$;
			\item $\mathcal{H}$ satisfies the packing property. 
		\end{enumerate}
	\end{conjecture}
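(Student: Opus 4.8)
The plan is to exploit the algebraic--combinatorial dictionary that turns \cref{packing conjecture} into a purely clutter-theoretic statement, isolate the one implication that is genuinely hard, and then describe the standard line of attack on it. First I would invoke the translation of Gitler, Valencia, and Villarreal~\cite{Gitler2007Rees}, under which the condition $I(\H)^{(k)}=I(\H)^k$ for all $k\ge 1$ is equivalent to $\H$ possessing the max-flow min-cut (MFMC) property recorded in the displayed duality equation above. This reduces statement (i) to ``$\H$ is MFMC,'' and hence reduces the whole conjecture to the combinatorial equivalence ``MFMC $\iff$ packing property.'' With this reformulation the problem splits into two implications of very unequal difficulty.

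The implication (i) $\Rightarrow$ (ii), equivalently MFMC $\Rightarrow$ packing, is the routine direction. Here I would use two standard facts about the MFMC property: that it is closed under taking minors (both deletions and contractions of vertices), and that any MFMC clutter satisfies the K\"onig property, since specializing the weight vector to $\alpha=\mathbf 1$ in the duality equation forces the minimum cover number, which equals $\height I(\H)$, to coincide with the maximum integral matching, i.e.\ the matching number. Applying this to each minor of $\H$ yields the packing property immediately.

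The entire difficulty is concentrated in the reverse implication (ii) $\Rightarrow$ (i), that is, packing property $\Rightarrow$ MFMC. The natural strategy is a minor-minimal counterexample argument: assume some clutter has the packing property yet fails MFMC, and choose such an $\H$ minimal with respect to minors, so that every proper minor of $\H$ is MFMC (equivalently Mengerian) while $\H$ itself is not. One then attempts to pin down the structure of such a minimally non-Mengerian clutter using the polyhedral theory of ideal and minimally non-ideal clutters---Lehman's structure theorem for minimally non-ideal clutters together with the analysis of the set-covering polyhedron $\{x\ge 0 : Ax\ge\mathbf 1\}$---and to show that the assumed packing property is incompatible with the constraints these results impose, producing a contradiction.

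The main obstacle is exactly this final step, and it is emphatically not a routine calculation: ruling out every minimally non-Mengerian clutter that enjoys the packing property \emph{is} the content of the Conforti--Cornu\'ejols conjecture, which has remained open in full generality since 1993. The available structural tools constrain such a hypothetical $\H$ severely---for instance, it cannot contain the classical obstructions such as $Q_6$ or odd-hole-type minors---but no argument is known that simultaneously excludes all remaining configurations. This is why a complete resolution is currently accessible only under additional hypotheses that rigidly restrict the combinatorial type of $\H$; in the present article the restriction to $(n-2)$-uniformity makes the relevant minors transparent enough to carry the minimal-counterexample analysis through to a contradiction, whereas the unrestricted statement remains conjectural.
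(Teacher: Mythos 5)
You are right that this statement is a conjecture, not a theorem, and your honest assessment is accurate as far as the general problem goes: the paper itself offers no proof of \Cref{packing conjecture} in full generality, and your reduction of (i) to the MFMC property via the Gitler--Valencia--Villarreal dictionary (the paper invokes \cite[Theorem~14.3.6]{RHV} for exactly this in the proof of \Cref{main theorem12345}), together with your observation that MFMC $\Rightarrow$ packing is the routine direction (minor-closedness of MFMC plus K\"onig at $\alpha=\mathbf 1$), matches the standard framework the paper works inside. The genuine defect is in your final paragraph, where you assert that the paper resolves the $(n-2)$-uniform case by ``carrying the minimal-counterexample analysis through to a contradiction'' using Lehman-type structure theory for minimally non-ideal clutters. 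Nothing of the sort appears in the paper, and a reader following your outline would be looking for an argument that does not exist there.

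What the paper actually does is entirely different and far more elementary. It identifies $I(\H)$ with the complementary edge ideal $I_c(G)$ of the associated graph $G=G_{\H}$, computes the primary decomposition of $I_c(G)$ combinatorially (\Cref{primary decomposition}), and then runs a degree count: if $G$ is not a star, then $\prod_{i=1}^n x_i\in I_c(G)^{(2)}$, so for any edge $e$ the monomial $\x_{\bar e}^{k-2}\prod_{i=1}^n x_i$ lies in $I_c(G)^{(k)}$ yet has degree $n+(k-2)(n-2)$, which is strictly smaller than the minimal generating degree $k(n-2)$ of $I_c(G)^k$ once $n\ge 5$; the star case is disposed of by a parallel computation. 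This forces $n\le 4$, after which the graphs on at most four vertices are classified by hand (\Cref{vertex lemma}, using explicit primary decompositions and witnesses such as $(x_2x_3)^{k-1}x_4$). The packing direction is even simpler: the packing property forces two generators of $I_c(G)$ with disjoint supports, whence $2n-4\le n$, i.e.\ $n\le 4$, and bipartiteness via \cite[Proposition~4.27]{grv2009} completes the classification. Note also that the paper's theorem is strictly stronger than what your scheme would deliver even in outline: it shows equality at a \emph{single} $k\ge 2$ already implies MFMC for $(n-2)$-uniform clutters, a phenomenon that fails for general clutters (\Cref{last remark}, $G=C_5$), and which no minor-minimality argument of the kind you sketch would capture.
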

	
	Over the past three decades, there has been substantial progress toward verifying this conjecture. It is known to hold for several important classes of clutters, including balanced clutters \cite{FulkersonHoffmanOppenheim1974}, binary clutters \cite{Seymour1977}, dyadic clutters \cite{CornuejolsMargotGuenin2000}, and a family of $2$-partitionable clutters \cite{FloresMendezGitlerReyes2008}. In the context of edge ideals of clutters, the conjecture is verified in a few special cases, including edge ideals of graphs \cite{SimisVasconcelosVillarreal1994}, $3$-path ideals of graphs \cite{AlilooeeBanerjee2021}, $4$-path ideals of graphs \cite{HochstattlerNasernejad2024}, and the matroidal ideals \cite{FicarraMoradi2025SymbolicPowersPolymatroidal}.
	
	In this direction, a closely related and more challenging problem is the following.
	
	\begin{question}\label{main question}
		Let $\mathcal{H}$ be a clutter, and let $I(\H)$ denote its edge ideal. For a fixed positive integer $k$, can one provide a combinatorial classification of all clutters $\H$ for which $I(\H)^{(k)} = I(\H)^k$?
	\end{question}

	Following~\cite{Mendez2025Symbolic}, a monomial ideal \( I \) is said to be \emph{Simis in degree} \( k \) if \( I^{(k)} = I^k \) for a fixed positive integer \( k \). The question posed above thus seeks a combinatorial characterization of clutters whose edge ideals are Simis in degree \( k \). This problem remains largely open even within the class of \( d \)-uniform clutters. To the best of our knowledge, complete answers are known only in the cases \( d = 1,2,n \). Indeed, when \( d = 1 \), each vertex of the clutter \( \mathcal{H} \) forms an edge, and consequently \( I(\mathcal{H}) \) is generated by variables. On the other hand, when \( d = n \), the clutter \( \mathcal{H} \) consists of a single edge, making \( I(\mathcal{H}) \) a principal ideal. Hence, in both cases, \( I(\mathcal{H}) \) is Simis in degree \( k \) for every \( k>0 \). In the case of $d=2$, a combinatorial classification in terms of the associated graph follows from \cite[Lemma 5.8 and Theorem 5.9]{SimisVasconcelosVillarreal1994} (see also \cite[Lemma 3.10]{RTY11}). For a general \( d \), a necessary condition for the equality, expressed in terms of the dual ideal, was established in~\cite[Theorem~6.1]{Mendez2025Symbolic}.
	
	Our main result of this paper provides a complete answer to \cref{main question} and a verification of \cref{packing conjecture} for the $(n-2)$-uniform clutters. Our approach is based on expressing the edge ideal of such a clutter as the complementary edge ideal of an associated graph (see \Cref{section 2} for the definition). Recall that a clutter $\H$ is said to be \emph{$d$-uniform} if $|\E|=d$ for every $\E\in E(\H)$. The $2$-uniform clutters are well-known in the literature as graphs, while an $(n-2)$-uniform clutter $\H$ can be naturally associated with a graph $G_{\H}$ defined in the following way:
	\[
	V(G_{\H})=V(\H), \qquad
	E(G_{\H})=\{V(\H)\setminus \E \mid \E\in E(\H)\}.
	\]
	Under this correspondence, $I(\H)=I_c(G_{\H})$, where $I_c(G_{\H})$ denotes the \emph{complementary edge ideal} of $G_{\H}$. The notion of the complementary edge ideal of a graph was introduced independently and almost simultaneously by Hibi-Qureshi-Madani~\cite{HibiQureshiMadani2025} and Ficarra-Moradi~\cite{FicarraMoradiComp2025}. Although a broader version for clutter had earlier appeared in Villarreal’s monograph~\cite[Definition~14.6.23]{RHV}, where it is referred to as the dual of the edge ideal of a clutter. The complementary edge ideal of a graph has been the subject of considerable recent attention; see, for instance,~\cite{FicarraMoradiComp2025, FicarraMoradiRees2025, FicarraMoradiSR2025, HibiQureshiMadani2025}. These studies reveal deep combinatorial patterns governing the algebraic properties of the corresponding ideals and their powers. Using the language of complementary edge ideal, we provide a complete solution to \Cref{main question} and confirm the Conforti–Cornu\'ejols Conjecture for $(n-2)$-uniform clutters in the following theorem.

	\begin{customthm}{\ref{main theorem12345}}
		Let $\H$ be an $(n-2)$-uniform clutter with $|V(\H)|=n$. Let $G_{\H}$ denote the associated graph such that $I(\H)=I_c(G_{\H})$. Then the following are equivalent.
		\begin{enumerate}
			\item $I(\H)^{(k)}=I(\H)^k$ for all $k\geq 1$;
			\item $I(\H)^{(k)}=I(\H)^k$ for some $k\geq 2$;
			\item $\H$ satisfies the MFMC property; 
			
			\item $\H$ satisfies the packing property;
			
			\item $G_{\H}$ is either a $K_2$, $K_3$, $P_3$, $2K_2$, $P_4$ or $C_4$ with (possibly) some isolated vertices.
		\end{enumerate}
	\end{customthm}
	In the next section, we show that the class of $(n-2)$-uniform clutters is not properly contained in any of the classes of clutters mentioned above for which the Conforti-Cornu\'ejols conjecture is known to hold. More precisely, for each of these classes, we construct $(n-2)$-uniform clutters that do not belong to that class; that is, for the classes of balanced clutters, binary clutters, dyadic clutters, $2$-partitionable clutters considered in \cite{FloresMendezGitlerReyes2008}, and the clutters associated with matroidal ideals, we exhibit examples of $(n-2)$-uniform clutters which fall outside each respective class. Note that for each $n \ge 7$, no $(n-2)$-uniform clutter can be realized as the $3$-path ideal or the $4$-path ideal of a graph. Furthermore, by \cite[Theorem 3.1]{FicarraMoradiComp2025}, it follows that the edge ideal of an $(n-2)$-uniform clutter is, in almost all cases, not a matroidal ideal.

	It is worth noting that the result stated in \Cref{main theorem12345} goes beyond a mere verification of the \Cref{packing conjecture} for $(n-2)$-uniform clutters. In fact, there exist examples of $2$-uniform clutters (graphs) $G$ for which $I(G)^{(k)} = I(G)^k$ holds for some fixed positive integer $k\geq 2$, even though $G$ fails to satisfy the MFMC property (see \Cref{last remark}).

    An important application of our result concerns a Linear Programming duality problem. In fact, as a consequence of \Cref{main theorem12345}, we establish in \Cref{lpp theorem} a necessary and sufficient condition for a certain $(0,1)$-matrix to admit an integral optimal solution in the corresponding Linear Programming duality equation.

	\section{Main results}\label{section 2}
	In this section, we present the proof of \Cref{main theorem12345}. In particular, for the proof of \Cref{main theorem12345}, we need to analyze the packing property of certain classes of clutters. To this end, we recall the following combinatorial notions:
	
Let \( \mathcal{H} \) be a clutter with the vertex set \( V(\H)\) and the edge set \( E(\mathcal{H}) \), and let $x_i\in V(\H)$.

\begin{itemize}
	\item \textbf{Contraction} \( \mathcal{H}/x_i \): The contraction of a clutter \( \mathcal{H} \) at \( x_i \), denoted by \( \mathcal{H}/x_i \), is the clutter with the vertex set $V( \mathcal{H}/x_i)=V(\H)\setminus \{x_i\}$ and the edge set $E(\mathcal{H}/x_i)$ consisting of the minimal elements of the set $\{ e \setminus \{x_i\} \mid e \in E(\mathcal{H}) \}$, where minimality is with respect to inclusion.
	
	\item \textbf{Deletion} \( \mathcal{H} \setminus x_i \): The deletion of a clutter \( \mathcal{H} \) at \( x_i \), denoted by \( \mathcal{H} \setminus x_i \), is the clutter with $V( \mathcal{H}\setminus x_i)=V(\H)\setminus \{x_i\}$ and $E(\mathcal{H}\setminus x_i)=\{e\in E(\H)\mid x_i\notin e\}$.
	
	\item \textbf{Minor}: A minor of a clutter \( \mathcal{H} \) is any clutter obtained from \( \mathcal{H} \) by a sequence of deletions and contractions, performed in any order.
		\end{itemize}
		
		As mentioned in the introduction, we say that $\H$ satisfies the \emph{K\"onig property} if the height of $I(\H)$ equals the {\it matching number} of $\H$. A clutter $\H$ is said to satisfy the \emph{packing property} if all its minors satisfy the K\"onig property. For definitions of matching number, vertex cover, and other related concepts, we advise the reader to the book by Villarreal \cite{RHV}.
		
		Let $\H$ be a clutter with vertex set $V(\H)=\{x_1,\ldots,x_n\}$ and edge set $E(\H)$. Let $\{y_1,\ldots,y_r\}$ be a new set of vertices. We define a new clutter $\H_{y_1\cdots y_r}$ as follows:
		\begin{align*}
			V(\H_{y_1\cdots y_r})&=V(\H)\cup\{y_1,\ldots,y_r\}\\
			E(\H_{y_1\cdots y_r})&=\{e\cup\{y_1,\ldots,y_r\}\mid e\in E(\H)\}.
		\end{align*}
		 In the following lemma we discuss the relationship of the packing property for $\H$ and $\H_{y_1\cdots y_r}$.
		
		\begin{lemma}\label{packing lemma}
			Let $\H$ and $\H_{y_1\cdots y_r}$ be two clutters defined as above. Then $\H$ satisfies the packing property if and only if $\H_{y_1\cdots y_r}$ satisfies the packing property.
		\end{lemma}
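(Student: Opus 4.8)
The plan is to reduce to the case of a single new vertex and then describe completely how the minors of the augmented clutter are built from the minors of $\H$. Since the construction composes, $\H_{y_1\cdots y_r}=(\H_{y_1\cdots y_{r-1}})_{y_r}$, an induction on $r$ reduces everything to the case $r=1$: it suffices to prove that for an arbitrary clutter $\mathcal{K}$ and a single new vertex $v$, the clutter $\mathcal{K}$ satisfies the packing property if and only if the clutter $\mathcal{K}_v$, obtained by adjoining $v$ to every edge of $\mathcal{K}$, does. Throughout I would use the order-independence of deletion and contraction recalled above, so that every minor is of the form $\mathcal{K}\setminus D/C$ for a disjoint pair $(D,C)$ of vertex subsets, with edge set the inclusion-minimal members of $\{e\setminus C : e\in E(\mathcal{K}),\ e\cap D=\emptyset\}$.

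The core of the argument is a case analysis of a minor of $\mathcal{K}_v$ according to the fate of the distinguished vertex $v$, using that \emph{every} edge of $\mathcal{K}_v$ contains $v$. First I would record the elementary fact that any clutter all of whose edges share a common vertex satisfies the K\"onig property: if it has no edges, both the height of its edge ideal and its matching number are $0$; if it has an edge, the common vertex is a vertex cover, forcing the height to equal $1$, and the edges pairwise intersect, forcing the matching number to equal $1$ as well. Now consider a minor $\mathcal{K}_v\setminus D/C$. If $v\in D$, then every edge meets $D$, so no edge survives and the minor has no edges, whence K\"onig holds trivially. If $v\notin D\cup C$, then $v$ persists in every surviving edge and the minor is exactly $(\mathcal{K}\setminus D/C)_v$, a clutter with a common vertex, which again satisfies K\"onig by the fact just noted.

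The remaining case, $v\in C$, carries the real content and supplies the bijection driving the equivalence. When $v$ is contracted, removing $C$ from a surviving edge $e\cup\{v\}$ deletes $v$ as well, so the edge set becomes the minimal members of $\{e\setminus(C\setminus\{v\}) : e\in E(\mathcal{K}),\ e\cap D=\emptyset\}$; since $v\notin V(\mathcal{K})$, this is literally the edge set of the minor $\mathcal{K}\setminus D/(C\setminus\{v\})$ of $\mathcal{K}$. Thus the minors of $\mathcal{K}_v$ in which $v$ is contracted coincide with all minors of $\mathcal{K}$, while every other minor of $\mathcal{K}_v$ satisfies K\"onig automatically. Consequently $\mathcal{K}_v$ has the packing property if and only if every minor of $\mathcal{K}$ satisfies K\"onig, that is, if and only if $\mathcal{K}$ has the packing property; feeding this through the induction yields the lemma. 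I expect the main obstacle to be purely bookkeeping: confirming that deletion and contraction may be normalized to the form $(D,C)$, that the three cases genuinely exhaust the minors of $\mathcal{K}_v$, and that the degenerate possibilities (an empty edge produced by contraction, or a clutter with no edges) are handled so that K\"onig holds by convention in each. Once the common-vertex observation is in place, none of these presents a genuine difficulty.
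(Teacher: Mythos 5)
Your proposal is correct and follows essentially the same route as the paper: reduce to $r=1$, then analyze minors of the augmented clutter according to whether the new vertex is deleted (no edges survive), survives (all edges share it, so K\"onig holds with value at most $1$), or is contracted (recovering exactly the minors of $\H$). Your version is marginally more careful in normalizing minors to the form $\mathcal{K}\setminus D/C$ and in handling the edgeless degenerate case, which the paper glosses over, but the underlying argument is identical.
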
 
		\begin{proof}
			Enough to assume $r=1$. It is easy to see that if $\H_{y_1}$ satisfies the packing property, then $\H$ also satisfies the packing property. Indeed, if $\H'$ is a minor obtained from $\H$ by a sequence of deletions and contractions of vertices $x_{i_1},\ldots,x_{i_t}$, then $\H'$ can also be obtained from $\H_{y_1}$ by first contracting the vertex $y_1$, and then applying the same sequence of deletions and contractions of vertices $x_{i_1},\ldots,x_{i_t}$. Thus, $\H'$ has the K\"onig property, and consequently, $\H$ has the packing property.
			
			Conversely, suppose $\H$ satisfies the packing property and $\H_{y_{1}}'$ is a minor obtained from $\H_{y_{1}}$ by a sequence of deletions and contractions of vertices $v_{1},\ldots,v_{s}$. Observe that if $v_i\neq y_1$ for each $i\in [s]$, then 
			\[
			\mathrm{ht}(I(\H_{y_{1}}'))=\mathrm{mat}(\H_{y_{1}}')=1,
			\]
			where $\mathrm{ht}$ denotes the height of the ideal, and $\mathrm{mat}$ denotes the matching number of the clutter. Thus, $\H_{y_{1}}'$ satisfies the K\"onig property. Now, suppose $v_i=y_1$ for some $i\in [s]$. Then two possible cases arise. If the role of $y_1$ is deletion in the sequence, then $\H_{y_{1}}'$ is a clutter with no edges, thus trivially satisfies the K\"onig property. If the role of $y_1$ is contraction in the sequence, then $\H_{y_{1}}'$ is the same as the minor obtained from $\H$ by performing the sequence of deletions and contractions of vertices $v_1,\ldots,\widehat{v_i},\ldots,v_s$ (in the same order). Thus, $\H_{y_{1}}'$ satisfies the K\"onig property, and consequently, $\H_{y_{1}}$ satisfies the packing property, as desired.
		\end{proof}

	The edge ideal of an $(n-2)$-uniform clutter on the vertex set $\{x_1,\ldots,x_n\}$ can be realized as the complementary edge ideal of an associated graph $G_{\H}$. Consequently, to verify the equality $I(\H)^{(k)} = I(\H)^k$ for an $(n-2)$-uniform clutter, it suffices to study the ordinary and symbolic powers of the complementary edge ideal of $G_{\H}$. For notational simplicity, we shall denote the graph by $G$ and its complementary edge ideal by $I_c(G)$. By definition, $I_c(G)=\left\l \x_{\bar e}\mid e\in E(G) \right\r$ is a squarefree monomial ideal in $R=\mathbb K[x_1,\ldots,x_n]$, where $\bar e=V(G)\setminus e$ and $\x_{\bar e}=\prod_{x_i\in \bar e}x_i$..
	
	For a finite simple graph $G$, the primary decomposition of $I_c(G)$ admits a combinatorial description, which we present below. Throughout, we adopt the convention that for any subset $A \subseteq \{x_1,\ldots,x_n\}$, the prime ideal $\p_A$ is generated by the variables in $A$.
	
	\begin{proposition}\label{primary decomposition}
		Let $G$ be a finite simple graph with at least one edge. Then
		\[
		I_c(G)
		= \Big(\bigcap_{x \text{ isolated in } G} \p_{\{x\}}\Big)
		\;\cap\;
		\Big(\bigcap_{e \in E(G^c)} \p_e\Big)
		\;\cap\;
		\Big(\bigcap_{G[A] \cong K_3} \p_A\Big).
		\] 
	\end{proposition}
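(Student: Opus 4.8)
The plan is to derive the decomposition from the standard description of the primary decomposition of a squarefree monomial ideal in terms of minimal vertex covers of its associated clutter. First I would recall that if $I=\langle \x_{F_1},\ldots,\x_{F_m}\rangle$ is squarefree, then $I$ is radical and equals $\bigcap_A \p_A$, where $A$ runs over the inclusion-minimal subsets of the vertex set meeting every support $F_i$ (a monomial lies in $\p_A$ exactly when its support meets $A$). For $I_c(G)$ the supports are the sets $\bar e=V(G)\setminus e$ with $e\in E(G)$, so $\p_A\supseteq I_c(G)$ holds if and only if $A\cap\bar e\neq\emptyset$ for every $e\in E(G)$, which since $\bar e\cap A=A\setminus e$ is equivalent to the clean combinatorial condition $A\not\subseteq e$ for every edge $e$. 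The whole problem thus reduces to classifying the inclusion-minimal subsets $A$ satisfying $A\not\subseteq e$ for all $e\in E(G)$ and matching the resulting intersection with the stated right-hand side.

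The key observation I would exploit is that every edge of $G$ has exactly two vertices, so the covering condition $A\not\subseteq e$ becomes automatic once $|A|\ge 3$. I would therefore organize the argument by the cardinality of $A$. If $|A|=1$, say $A=\{x\}$, then $A$ is a cover precisely when $x$ lies in no edge, i.e. $x$ is isolated, and such covers are trivially minimal. If $|A|=2$, say $A=\{u,v\}$, then the only edge that could contain $A$ is $\{u,v\}$ itself, so $A$ is a cover exactly when $\{u,v\}\notin E(G)$, that is $\{u,v\}\in E(G^c)$; such a cover is minimal precisely when neither endpoint is isolated. If $|A|=3$, the covering condition is automatic, and minimality forces every $2$-subset of $A$ to fail to cover, i.e. every pair in $A$ to be an edge, which is exactly $G[A]\cong K_3$ (and no vertex of a triangle is isolated). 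Finally, any $A$ with $|A|\ge 4$ contains a $3$-subset, which already covers, so no such $A$ is minimal. Hence the minimal vertex covers are precisely the singletons of isolated vertices, the non-edges with both endpoints non-isolated, and the triangles of $G$.

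Finally I would reconcile this list with the stated formula, whose second factor ranges over \emph{all} non-edges $e\in E(G^c)$ rather than only those with non-isolated endpoints. The extra primes are harmless: if $x$ is isolated and $\{x,v\}\in E(G^c)$, then $\p_{\{x\}}\subseteq\p_{\{x,v\}}$, so adjoining $\p_{\{x,v\}}$ to the intersection changes nothing. Combining this with the classification above yields
\[
\Big(\bigcap_{x \text{ isolated}} \p_{\{x\}}\Big)\cap\Big(\bigcap_{e \in E(G^c)} \p_e\Big)\cap\Big(\bigcap_{G[A] \cong K_3} \p_A\Big)=\bigcap_{A \text{ minimal cover}} \p_A = I_c(G),
\]
which is the desired identity. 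The one point genuinely requiring care is this last reconciliation: the right-hand side is deliberately written as an over-complete, non-irredundant intersection, so I must verify that the redundant components indexed by non-edges incident to an isolated vertex are dominated by the corresponding singleton primes and contribute no new constraint. Beyond that, the argument is the routine cardinality case analysis driven by the $|A|\ge 3$ observation, and no serious obstacle should arise.
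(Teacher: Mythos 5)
Your proof is correct, and it is genuinely more self-contained than the paper's. The paper disposes of this proposition almost entirely by citation: it observes that an isolated vertex $x$ divides every minimal generator of $I_c(G)$, so $\p_{\{x\}}$ is a minimal prime, and then invokes \cite[Theorem 2.1]{FicarraMoradiComp2025} (equivalently \cite[Theorem 1.1]{HibiQureshiMadani2025}) for the rest of the decomposition. You instead reprove the statement from first principles via the standard dictionary between minimal primes of a squarefree monomial ideal and minimal transversals of the supports of its generators: the condition $A\cap\bar e\neq\emptyset$ for all $e\in E(G)$ becomes $A\not\subseteq e$, and your case analysis on $|A|$ (isolated singletons; non-edges with non-isolated endpoints; triangles; nothing of size $\ge 4$) correctly recovers the minimal vertex covers of the associated $(n-2)$-uniform clutter. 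Your closing remark --- that the displayed formula is deliberately over-complete because non-edges incident to an isolated vertex index primes dominated by $\p_{\{x\}}$ --- is exactly the reconciliation needed and is handled correctly (each redundant $\p_e$ contains a singleton prime already present in the intersection). What your route buys is transparency and independence from the cited results; what the paper's route buys is brevity, since the combinatorial core is already in the literature. Both are valid; no gap.
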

	\begin{proof}
		Since $G$ contains at least one edge, it follows that $I_c(G)\neq (0)$, and thus $\G(I_c(G))\neq\emptyset$. Moreover, if $x$ is an isolated vertex of $G$, then for each $m\in \G(I_c(G))$ we have $x\mid m$, where $\G(I)$ denotes the unique minimal monomial generating set of a monomial ideal $I$. Thus, $\p_{\{x\}}$ is a minimal prime ideal of $I_c(G)$. The result now follows from \cite[Theorem 2.1]{FicarraMoradiComp2025} (see also \cite[Theorem 1.1]{HibiQureshiMadani2025}).
	\end{proof}
	
	\begin{remark}
		Let $\H$ be the $(n-2)$-uniform clutter such that $I(\H)=I_c(G)$. Then the minimal prime ideal of $I_c(G)$ corresponds to the minimal vertex covers of $\H$.
	\end{remark}
	
	Before going to our main results, let us recall the definition of symbolic powers.
	
	\begin{definition}[Symbolic power]\label{def-symbolic}
		{\rm Let $R$ be a Noetherian ring and $I\subset R$ be an ideal. Then, the  $k^{th}$ symbolic power of $I$ has the following two definitions.
			\begin{enumerate}
				\item $ \displaystyle I^{(k)}=\bigcap_{P\in \mathrm{Ass}(R/I)}(I^k R_P \cap R).$
				\item $\displaystyle I^{(k)}=\bigcap_{P\in \mathrm{Min}(I)}(I^k R_P \cap R),$  where $\mathrm{Min}(I)$ is the set of all minimal prime of $I$.
		\end{enumerate}}
	\end{definition}
	
	Note that for a squarefree monomial ideal $I$, the two definitions of symbolic power discussed above coincide, as $I$ has no embedded primes. More precisely, for a squarefree monomial ideal $I$, we have $
	I^{(k)} = \bigcap_{\p \in \mathrm{Ass}(I)} \p^k$,
	where $\mathrm{Ass}(I)$ denotes the set of associated primes of $I$. Consequently, using \Cref{primary decomposition}, we obtain
	\[
	I_c(G)^{(k)}
	= \Big(\bigcap_{x \text{ isolated in } G} \p_{\{x\}}^k\Big)
	\;\cap\;
	\Big(\bigcap_{e \in E(G^c)} \p_e^k\Big)
	\;\cap\;
	\Big(\bigcap_{G[A] \cong K_3} \p_A^k\Big),
	\]
	for each positive integer $k$.
	
	The following result addresses the equality of symbolic and ordinary powers of complementary edge ideals in the presence of isolated vertices in the underlying graph.
	
	\begin{proposition}\label{isolated lemma}
		Let $G$ be a graph and $x$ an isolated vertex of $G$. Let $\ha G=G\setminus x$. Then for each positive integer $k$, $I_c(G)^{(k)}=I_c(G)^{k}$ if and only if $I_c(\ha G)^{(k)}=I_c(\ha G)^{k}$.
	\end{proposition}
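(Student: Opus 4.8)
The plan is to exploit the elementary structural identity relating $I_c(G)$ and $I_c(\ha G)$ that arises because deleting an isolated vertex leaves the edge set unchanged. Write $x=x_n$ for the isolated vertex, $S=\K[x_1,\ldots,x_{n-1}]$ and $R=S[x_n]$. If $G$ has no edges the statement is trivial (both ideals are zero), so I assume $G$ has at least one edge; then $\ha G$ also has at least one edge and $E(G)=E(\ha G)$. For every edge $e$ one has $V(G)\setminus e=(V(\ha G)\setminus e)\cup\{x_n\}$, so $\x_{\bar e}=x_n\cdot \x_{\bar e'}$, where the two complements are taken in $V(G)$ and $V(\ha G)$ respectively. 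Hence the first step is to record the key identity
\[
I_c(G)=x_n\cdot I_c(\ha G)R .
\]
From this the ordinary powers are immediate: $I_c(G)^k=x_n^k\,I_c(\ha G)^kR$.

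The second step, which is the crux, is to prove the analogous identity for symbolic powers, namely $I_c(G)^{(k)}=x_n^k\,I_c(\ha G)^{(k)}R$. For this I would use the explicit intersection formula for $I_c(G)^{(k)}$ obtained from \Cref{primary decomposition}, splitting the three families of primes according to whether they involve $x_n$. The isolated vertex $x_n$ contributes the factor $\p_{\{x_n\}}^k=(x_n^k)$; every non-edge of $G$ through $x_n$ gives a prime $\p_{\{x_n,x_i\}}\supseteq(x_n)$, which is therefore redundant in the intersection; and since $x_n$ lies in no triangle, the triangles, the remaining non-edges, and the other isolated vertices of $G$ are precisely those of $\ha G$. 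Consequently
\[
I_c(G)^{(k)}=(x_n^k)\cap I_c(\ha G)^{(k)}R .
\]
The final part of this computation is the monomial observation that, because $I_c(\ha G)^{(k)}R$ is generated by monomials free of $x_n$, a monomial lies in $(x_n^k)\cap I_c(\ha G)^{(k)}R$ if and only if it is $x_n^k$ times a monomial of $I_c(\ha G)^{(k)}R$; that is, $(x_n^k)\cap I_c(\ha G)^{(k)}R=x_n^k\,I_c(\ha G)^{(k)}R$, which yields the desired symbolic power identity.

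Finally I would combine the two identities: $I_c(G)^{(k)}=I_c(G)^k$ holds if and only if $x_n^k\,I_c(\ha G)^{(k)}R=x_n^k\,I_c(\ha G)^kR$. Since $x_n$ is a nonzerodivisor in the domain $R$, we may cancel $x_n^k$ via the colon $(\,\cdot\,:x_n^k)$, and since $R=S[x_n]$ is a faithfully flat extension of $S$ we may contract back to $S$; both operations are reversible, so this is equivalent to $I_c(\ha G)^{(k)}=I_c(\ha G)^k$, completing the argument. The main obstacle is the second step: one must argue carefully that the primes through $x_n$ are genuinely redundant and that the monomial intersection collapses to $x_n^k\,I_c(\ha G)^{(k)}R$. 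Equivalently, this is the general fact that for a monomial ideal $J$ no generator of which is divisible by a variable $x$, one has $(xJ)^{(k)}=x^k J^{(k)}$.
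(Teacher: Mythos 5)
Your proof is correct and rests on the same two observations as the paper's: every generator $\x_{\bar e}$ of $I_c(G)$ is $x$ times the corresponding generator of $I_c(\ha G)$, and by \Cref{primary decomposition} the minimal primes of $I_c(G)$ are $\p_{\{x\}}$ together with (the extensions of) those of $I_c(\ha G)$, so the symbolic power picks up exactly the factor $x^k$. You package this as the ideal identities $I_c(G)^k=x^k I_c(\ha G)^k R$ and $I_c(G)^{(k)}=x^k I_c(\ha G)^{(k)}R$ followed by cancellation, whereas the paper runs the equivalent element-by-element argument on monomial generators; the substance is the same.
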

	\begin{proof}
		If $G$ does not contain any edge, then $I_c(G)=I_c(\ha G)=\l 0\r$, and we clearly have the result. Therefore, we may assume $G$ contains at least one edge. By definition of symbolic powers, we have $I_c(G)^{k}\subseteq I_c(G)^{(k)}$ and $I_c(\ha G)^{k}\subseteq I_c(\ha G)^{(k)}$.
		
		Suppose $I_c(G)^{k}=I_c(G)^{(k)}$ and take $m\in \G(I_c(\ha G)^{(k)})$. Then, by \Cref{primary decomposition}, $x^k\,m\in I_c(G)^{(k)}=I_c(G)^{k}$. Therefore, there exists a monomial $m'$ in $R$ such that 
		\[
		x^k\,m=m'\,\prod_{i=1}^k\x_{\bar e_i},
		\]
		where $e_1,\ldots,e_k\in E(G)$. Observe that $x\in \bar e_i$ for each $i\in [k]$, where $\bar e_i=V(G)\setminus e_i$. Hence, we can write $x^km=x^km'\prod_{i=1}^k\x_{\bar f_i}$, where $f_i=e_i\setminus\{x\}\in E(\ha G)$ for each $i\in [k]$ and $\bar f_i=V(\ha G)\setminus f_i$. In other words, $m\in I_c(\ha G)^k$, as required.
		
		Next, suppose $I_c(\ha G)^{k}=I_c(\ha G)^{(k)}$, and take $m\in \G(I_c(G)^{(k)})$. Then, again by \Cref{primary decomposition}, $m=x^km_1$, where $m_1\in I_c(\ha G)^{(k)}=I_c(\ha G)^{k}$. Therefore, $m_1=m_1'\prod_{i=1}^k\x_{\bar g_i}$, where $g_i\in E(\ha G)$ for each $i\in [k]$ and $\bar g_i=V(\ha G)\setminus g_i$. Note that $x\,\x_{\bar g_i}\in I_c(G)$ for each $i\in [k]$. Thus $m=m_1'x^k\prod_{i=1}^k\x_{\bar g_i}\in I_c(G)^{k}$, and this completes the proof of the proposition.
	\end{proof}
	
	\begin{remark}
		After a preliminary version of this article was put into arXiv, it was informed to us by A. Ficarra that the result in \Cref{isolated lemma} can also be deduced from \cite[Proposition 6.3]{FicarraMoradi2025SymbolicPowersPolymatroidal}. However, we keep the proof here for the sake of completeness.
	\end{remark}
	\begin{remark}
		In view of \Cref{isolated lemma}, it follows that to verify the equality $I_c(G)^{(k)}=I_c(G)^{k}$ for a graph $G$, it suffices to restrict attention to graphs without isolated vertices.
	\end{remark}
	
	The following lemma plays a key role in the proof of our main result (\Cref{main theorem12345}). In this lemma, we characterize all graphs $G$ on at most four vertices for which the equality $I_c(G)^{(k)} = I_c(G)^k$ holds. We note that there are exactly seven graphs on four vertices without any isolated vertex, which are depicted in \Cref{fig:graphs4vertices}.
	
	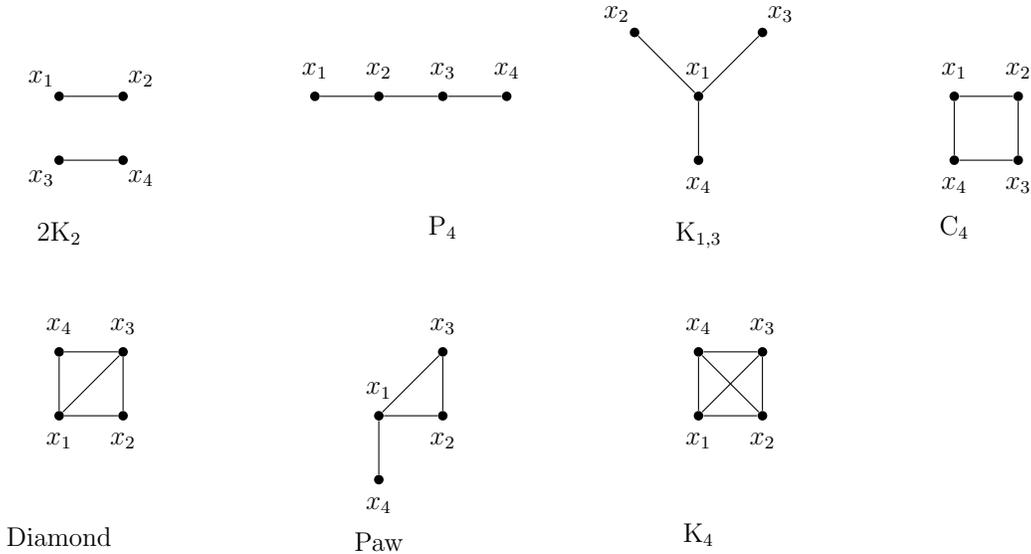
\begin{figure}[htbp]
		\centering
		\begin{tikzpicture}[scale=0.85, transform shape, every node/.style={circle,inner sep=1.5pt, fill=black}]
			
			\node[label=above left:{$x_1$}] (G1v1) at (0,0) {};
			\node[label=above right:{$x_2$}] (G1v2) at (1,0) {};
			\node[label=below left:{$x_3$}] (G1v3) at (0,-1) {};
			\node[label=below right:{$x_4$}] (G1v4) at (1,-1) {};
			\draw (G1v1) -- (G1v2);
			\draw (G1v3) -- (G1v4);
			\node[below=.6cm of G1v3, draw=none, fill=none] {2K$_2$};
			
			\node[label=above:{$x_1$}] (P4v1) at (4,0) {};
			\node[label=above:{$x_2$}] (P4v2) at (5,0) {};
			\node[label=above:{$x_3$}] (P4v3) at (6,0) {};
			\node[label=above:{$x_4$}] (P4v4) at (7,0) {};
			\draw (P4v1) -- (P4v2) -- (P4v3) -- (P4v4);
			\node[below=1.6cm of P4v3, draw=none, fill=none] {P$_4$};
			
			\node[label=above:{$x_1$}] (C1) at (10,0) {};
			\node[label=above left:{$x_2$}] (C2) at (9,1) {};
			\node[label=above right:{$x_3$}] (C3) at (11,1) {};
			\node[label=below:{$x_4$}] (C4) at (10,-1) {};
			\draw (C1) -- (C2) (C1) -- (C3) (C1) -- (C4);
			\node[below=.6cm of C4, draw=none, fill=none] {K$_{1,3}$};
			
			\node[label=above:{$x_1$}] (C4v1) at (14,0) {};
			\node[label=above:{$x_2$}] (C4v2) at (15,0) {};
			\node[label=below:{$x_4$}] (C4v4) at (14,-1) {};
			\node[label=below:{$x_3$}] (C4v3) at (15,-1) {};
			\draw (C4v1) -- (C4v2) -- (C4v3) -- (C4v4) -- (C4v1);
			\node[below=.6cm of C4v4, draw=none, fill=none] {C$_4$};
			
			\node[label=below:{$x_1$}] (D1) at (0,-5) {};
			\node[label=below:{$x_2$}] (D2) at (1,-5) {};
			\node[label=above:{$x_3$}] (D3) at (1,-4) {};
			\node[label=above:{$x_4$}] (D4) at (0,-4) {};
			\draw (D1)--(D2)--(D3)--(D4)--(D1) (D1)--(D3);
			\node[below=.9cm of D1, draw=none, fill=none] {Diamond};
			
			\node[label=above:{$x_1$}] (Paw1) at (5,-5) {};
			\node[label=below:{$x_2$}] (Paw2) at (6,-5) {};
			\node[label=above:{$x_3$}] (Paw3) at (6,-4) {};
			\node[label=below:{$x_4$}] (Paw4) at (5,-6) {};
			\draw (Paw1)--(Paw2)--(Paw3)--(Paw1) (Paw1)--(Paw4);
			\node[below=.4cm of Paw4, draw=none, fill=none] {Paw};
			
			\node[label=below:{$x_1$}] (K1) at (10,-5) {};
			\node[label=below:{$x_2$}] (K2) at (11,-5) {};
			\node[label=above:{$x_3$}] (K3) at (11,-4) {};
			\node[label=above:{$x_4$}] (K4) at (10,-4) {};
			\draw (K1)--(K2)--(K3)--(K4)--(K1) (K1)--(K3) (K2)--(K4);
			\node[below=1.4cm of K1, draw=none, fill=none] {K$_4$};
			
		\end{tikzpicture}
		\caption{All non-isomorphic graphs on four vertices without isolated vertices.}
		\label{fig:graphs4vertices}
	\end{figure}
	
	\begin{lemma}\label{vertex lemma}
		Let $G$ be a graph on at most four vertices with no isolated vertex and at least one edge, and let $k\ge 2$ be an integer. Then 
		\[
		I_c(G)^{(k)}=I_c(G)^{k}
		\]
		if and only if $G$ is either a $K_2$, $K_3$, $P_3$, $2K_2$, $P_4$ or $C_4$.
	\end{lemma}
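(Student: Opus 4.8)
The plan is to determine $I_c(G)$ explicitly for each of the ten graphs $G$ on at most four vertices having no isolated vertex---namely $K_2$, then $P_3$ and $K_3$ on three vertices, and the seven four-vertex graphs of \Cref{fig:graphs4vertices}---and then to read off the equality from the algebraic shape of $I_c(G)$. The key elementary observation is that for an edge $e=\{x_i,x_j\}$ the generator $\x_{\bar e}$ is the product of the variables \emph{outside} $e$: hence $I_c(K_2)=(1)=R$; on three vertices $I_c(G)$ is generated by variables, giving $I_c(P_3)=(x_1,x_3)$ and $I_c(K_3)=(x_1,x_2,x_3)$; and on four vertices each $\x_{\bar e}$ is a squarefree quadric, so $I_c(G)$ is the edge ideal $I(G')$ of the graph $G'$ on $V(G)$ with edge set $\{\bar e\mid e\in E(G)\}$. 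Inspecting the seven four-vertex graphs, $G'\cong 2K_2,P_4,C_4$ (all bipartite) for $G=2K_2,P_4,C_4$, whereas $G'$ contains a triangle for $G=K_{1,3}$ (namely $K_3$), the diamond, the paw, and $K_4$.

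For the six listed graphs I would prove $I_c(G)^{(k)}=I_c(G)^k$ for all $k\ge 1$, hence for the fixed $k\ge2$. The ideals $I_c(K_2)=R$, $I_c(P_3)$, $I_c(K_3)$, and $I_c(2K_2)=(x_1x_2,x_3x_4)$ are generated by regular sequences (variables, or monomials with pairwise disjoint supports), so they are complete intersections, for which symbolic and ordinary powers always coincide. For $G=P_4$ and $G=C_4$ the ideal $I_c(G)=I(G')$ is the edge ideal of a bipartite graph, and the Simis--Vasconcelos--Villarreal theorem \cite{SimisVasconcelosVillarreal1994} gives $I(G')^{(k)}=I(G')^k$ for all $k$.

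For the remaining four graphs I would argue by contraposition, exhibiting one monomial that separates the two powers for the given $k\ge2$. In each case $G'$ has a triangle on some $\{a,b,c\}\subseteq V(G)$, so every vertex cover of $G'$---equivalently every minimal prime $\p_C$ appearing in the decomposition of \Cref{primary decomposition}---contains at least two of $a,b,c$, since covering the three triangle edges requires two vertices. Choosing exponents $\alpha,\beta,\gamma\ge 0$ with all three pairwise sums at least $k$ and total $\alpha+\beta+\gamma=\lceil 3k/2\rceil$ (take $\alpha=\beta=\gamma=k/2$ for even $k$, and $\alpha=\beta=(k+1)/2$, $\gamma=(k-1)/2$ for odd $k$), the monomial $m=x_a^{\alpha}x_b^{\beta}x_c^{\gamma}$ involves the variables of any such cover $C$ to total degree at least $k$, whence $m\in\p_C^{k}$ for all $C$ and therefore $m\in I_c(G)^{(k)}$. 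Since $I_c(G)=I(G')$ is generated in degree $2$, every monomial of $I_c(G)^{k}$ has degree at least $2k$; as $\deg(m)=\lceil 3k/2\rceil<2k$ for all $k\ge2$, we conclude $m\notin I_c(G)^{k}$ and the equality fails.

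The main obstacle is making this separation work for \emph{every} fixed $k\ge2$ rather than only $k=2$. The symmetric candidate $(x_ax_bx_c)^{\lceil k/2\rceil}$ breaks down for odd $k$---for instance $(x_ax_bx_c)^2$ lies in $I(G')^3$---so one genuinely needs the asymmetric minimal-degree monomial above together with the strict inequality $\lceil 3k/2\rceil<2k$, which holds precisely because $k\ge2$. The only remaining bookkeeping is to check that the chosen exponents exist and that the total degree of $m$ on $C$ is at least $k$ for every minimal cover, including those meeting $\{a,b,c\}$ in all three vertices or using the fourth vertex; this is routine once the triangle of $G'$ is isolated.
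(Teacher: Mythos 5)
Your proposal is correct and follows essentially the same route as the paper: identify $I_c(G)$ with the edge ideal of the ``complemented'' graph, invoke complete intersections and the Simis--Vasconcelos--Villarreal bipartite theorem for the six good graphs, and separate symbolic from ordinary powers by a low-degree monomial for the remaining four. The only difference is cosmetic: the paper's witness is $(x_ix_j)^{k-1}x_l$ of degree $2k-1$, read off case by case from the primary decompositions, while you use the uniform triangle-supported monomial of degree $\lceil 3k/2\rceil$; both succeed for exactly the same reason (every minimal cover meets the triangle in at least two vertices, and the degree is below $2k$).
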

	\begin{proof}
		If $|V(G)|\le 3$, then the only possibilities of $G$ are $K_2,K_3$ and $P_3$. In this case, it is easy to see that $I_c(G)$ is either the zero ideal or generated by variables. Thus, we always have $I_c(G)^{k}=I_c(G)^{(k)}$ when $|V(G)|\le 3$.
		
		Next, consider the case when $|V(G)|=4$. In this case, all possible graphs are depicted in \Cref{fig:graphs4vertices}, and by construction $I_c(G)$ is the same as the edge ideal of a graph $\til G$, where 
		\begin{align*}
			V(\til G)&=V(G),\\
			E(\til G)&=\{\{x,y\}\mid V(G)\setminus \{x,y\}\in E(G)\}.
		\end{align*} 
		It is easy to see that the graphs $\til G$ associated with $2K_2,P_4$ and $C_4$ are bipartite graph, and hence, by \cite[Theorem 5.9]{SimisVasconcelosVillarreal1994}, we have $I_c(G)^{k} = I_c(G)^{(k)}$. Now consider the graph $K_{1,3}$ in \Cref{fig:graphs4vertices}. Then, by \Cref{primary decomposition}, the primary decomposition of the complementary edge ideal is 
		\[
		I_c(K_{1,3})=\l x_2,x_3\r\cap \l x_2,x_4\r\cap \l x_3,x_4\r.
		\]
		Observe that $(x_2x_3)^{k-1}x_4\in I_c(K_{1,3})^{(k)}$, whereas $(x_2x_3)^{k-1}x_4\notin I_c(K_{1,3})^{k}$ since each monomial in $I_c(K_{1,3})^{k}$ has degree at least $2k$. Thus, $I_c(G)^{k} \neq I_c(G)^{(k)}$ if $G=K_{1,3}$.
		
		Next, consider the graph $G_1$, when $G_1$ is a {\it paw} in \Cref{fig:graphs4vertices}. In this case, by \Cref{primary decomposition}, the primary decomposition of the complementary edge ideal is 
		\[
		I_c(G_1)=\l x_2,x_4\r\cap \l x_3,x_4\r\cap \l x_1,x_2,x_3\r.
		\]
		By similar argument as above, we see that $(x_2x_4)^{k-1}x_3\in I_c(G_1)^{(k)}\setminus I_c(G_1)^k$. Thus, $I_c(G)^{k} \neq I_c(G)^{(k)}$ when $G$ is a paw.
		
		The remaining graphs are $K_4$ and the diamond graph in \Cref{fig:graphs4vertices}. Let $G_2$ denote the diamond graph. Then the primary decomposition for $K_4$ and $G_2$ are as follows:
		\begin{align*}
			I_c(K_4)&=\l x_1,x_2,x_4\r\cap \l x_1,x_3,x_4\r\cap \l x_1,x_2,x_3\r\cap \l x_2,x_3,x_4\r,\\
			I_c(G_2)&=\l x_2,x_4\r\cap \l x_1,x_3,x_4\r\cap \l x_1,x_2,x_3\r.
		\end{align*}
		Proceeding as before, we see that $(x_1x_2)^{k-1}x_3\in I_c(K_4)^{(k-1)}\setminus I_c(K_4)^{k-1}$ and $(x_1x_2)^{k-1}x_4\in I_c(G_1)^{(k-1)}\setminus I_c(G_1)^{k-1}$. This completes the proof of the lemma.
	\end{proof}
	
	We are now ready to prove the main result of this article. Throughout, we adopt the notation $\deg(m)$ to denote the $\mathbb N$-graded degree of a monomial $m$ in the polynomial ring $R$.
	
	\begin{theorem}\label{main theorem12345}
		Let $\H$ be an $(n-2)$-uniform clutter with $|V(\H)|=n$. Let $G$ denote the associated graph such that $I(\H)=I_c(G)$. Then the following are equivalent.
		\begin{enumerate}
			\item $I(\H)^{(k)}=I(\H)^k$ for all $k\geq 1$;
			\item $I(\H)^{(k)}=I(\H)^k$ for some $k\geq 2$;
			\item $\H$ satisfies the MFMC property;
			
			\item $\H$ satisfies the packing property; 
			
			\item $G$ is either a $K_2$, $K_3$, $P_3$, $2K_2$, $P_4$ or $C_4$ with (possibly) some isolated vertices.
		\end{enumerate}
	\end{theorem}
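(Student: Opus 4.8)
The plan is to prove the cyclic chain of implications $(1)\Rightarrow(2)\Rightarrow(5)\Rightarrow(1)$ together with the equivalences $(3)\Leftrightarrow(4)$ and $(4)\Leftrightarrow(5)$, so that all five statements become equivalent. The implication $(1)\Rightarrow(2)$ is trivial. The genuine heart of the argument is $(2)\Rightarrow(5)$: assuming $I(\H)^{(k)}=I(\H)^k$ for some $k\ge 2$, I want to show the associated graph $G$ (after deleting isolated vertices, which is harmless by \Cref{isolated lemma}) can only be one of $K_2,K_3,P_3,2K_2,P_4,C_4$. My strategy is to reduce to the four-vertex case already settled in \Cref{vertex lemma}. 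The key observation is that $\Cref{vertex lemma}$ identifies exactly which four-vertex graphs are forbidden, namely $K_{1,3}$, the paw, the diamond, and $K_4$; each of these was ruled out by exhibiting an explicit monomial of the form $(x_ax_b)^{k-1}x_c$ lying in the symbolic power but not the ordinary power. The plan is therefore to argue that if $G$ (with no isolated vertices) is \emph{not} one of the six good graphs, then $G$ contains, as an induced subgraph on four of its vertices, one of these four forbidden configurations, and that the presence of such an induced subgraph forces the strict containment $I_c(G)^k\subsetneq I_c(G)^{(k)}$ for every $k\ge 2$.

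The main technical step I expect to be the obstacle is establishing the \emph{inheritance} of the strict containment from an induced forbidden subgraph up to $G$ itself. Concretely, I would take the degree-$(2k-1)$ witness monomial $w=(x_ax_b)^{k-1}x_c$ produced in \Cref{vertex lemma} on the four relevant vertices $\{x_a,x_b,x_c,x_d\}$, and show that multiplying it by a suitable monomial in the \emph{remaining} variables of $G$ yields a monomial that still lies in $I_c(G)^{(k)}\setminus I_c(G)^k$. The symbolic-power membership is controlled by the primary decomposition in \Cref{primary decomposition}: one must check the exponent inequalities at each associated prime $\p_e$ (for $e\in E(G^c)$) and $\p_A$ (for triangles $G[A]\cong K_3$), and here the larger ambient vertex set introduces many more primes to verify against. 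The non-membership in the ordinary power is the more delicate side, since in the larger graph the generators $\x_{\bar e}$ have degree $n-2$ rather than $2$, and the degree-counting argument from \Cref{vertex lemma} must be replaced by an argument tracking the support of the variables outside $\{x_a,x_b,x_c,x_d\}$; the natural tool is to localize or restrict to the subpolynomial ring on those four variables, or to invoke a minor/contraction operation that collapses $G$ onto its induced four-vertex subgraph while preserving the failure of the equality.

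For the structural combinatorial claim---that a graph with no isolated vertices avoiding all four forbidden induced subgraphs must be one of the six good graphs---I would proceed by a short case analysis on the number of vertices and edges. On at most four vertices this is exactly \Cref{vertex lemma}. For five or more vertices, forbidding $K_{1,3}$ (claw-free), the paw, the diamond, and $K_4$ as induced subgraphs is extremely restrictive: claw-freeness bounds how edges can fan out from a vertex, while forbidding the diamond and $K_4$ forces triangles to be edge-disjoint and forbidding the paw prevents a triangle from having any pendant attachment. A direct argument shows that under these constraints, together with having no isolated vertices, the only connected possibilities are very small, and the component structure (using that $2K_2$ is allowed but, say, $3K_2$ or $K_2\sqcup P_3$ would have too many vertices for our six-graph list) pins $G$ down completely. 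I expect this combinatorial enumeration to be routine but slightly tedious, so I would organize it by first bounding the maximum degree and then the number of components.

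Finally, for $(5)\Rightarrow(1)$ I would verify the equality $I_c(G)^{(k)}=I_c(G)^k$ for all $k$ directly on each of the six good graphs: for $K_2,K_3,P_3$ the ideal $I_c(G)$ is zero or generated by variables (as in \Cref{vertex lemma}), and for $2K_2,P_4,C_4$ the complementary edge ideal coincides with the edge ideal of a bipartite graph $\til G$, whence the equality for all $k$ follows from \cite[Theorem 5.9]{SimisVasconcelosVillarreal1994}; \Cref{isolated lemma} then allows reinstating isolated vertices. For the equivalence of the packing property, $(4)\Leftrightarrow(5)$ would be proved by computing the minors of each good clutter $\H$ and checking the K\"onig property directly (with \Cref{packing lemma} handling the extra cone vertices introduced by the $(n-2)$-uniform structure), and the conjecturally hard $(3)\Leftrightarrow(4)$ is obtained for free: $(1)\Rightarrow$ MFMC is classical, while MFMC always implies packing, so closing the loop through $(1)$ confirms the Conforti--Cornu\'ejols conjecture in this case.
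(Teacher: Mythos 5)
There is a genuine gap, and it sits at the heart of your $(2)\Rightarrow(5)$ argument. The structural claim you rely on --- that a graph with no isolated vertices which is not one of the six good graphs must contain $K_{1,3}$, the paw, the diamond, or $K_4$ as an induced subgraph --- is false. The graphs $P_5$, $C_5$, $C_6$, $3K_2$, and more generally every disjoint union of paths and of cycles of length at least four, contain none of these four configurations (they are claw-free of maximum degree at most two and triangle-free), yet none of them is on your list of six. For all such $G$ your reduction produces no four-vertex witness to lift, so the implication is unproved exactly on the infinite family of graphs where it has content; your parenthetical remark that $3K_2$ or $K_2\sqcup P_3$ "would have too many vertices for our six-graph list" is not an argument that $I_c(G)^{(k)}\neq I_c(G)^k$ for them. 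The paper sidesteps induced subgraphs entirely: for any non-star $G$ on $n\ge 5$ vertices with no isolated vertex, every vertex lies in an edge of $G^c$ or in a triangle of $G$, so $\prod_{i=1}^n x_i\in I_c(G)^{(2)}$ by \Cref{primary decomposition}, hence $\x_{\bar e}^{\,k-2}\prod_{i=1}^n x_i\in I_c(G)^{(k)}$ has degree $n+(k-2)(n-2)<k(n-2)$, contradicting $I_c(G)^{(k)}=I_c(G)^{k}$; star graphs are excluded by a separate explicit monomial. This forces $n\le 4$ outright and hands the rest to \Cref{vertex lemma}. Even where your forbidden subgraphs do occur, the "inheritance" step you flag as the main obstacle is genuinely problematic: clutter contraction and deletion (setting variables to $1$ or $0$) applied to $\H$ do not correspond to passing to induced subgraphs of $G$, and contracting the vertices outside $\{x_a,x_b,x_c,x_d\}$ does not yield $I_c$ of the induced four-vertex subgraph, so neither localization nor minors collapses $G$ the way your plan requires.

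A second, independent gap is the direction $(4)\Rightarrow(5)$. Computing the minors of the six good clutters and verifying K\"onig only establishes $(5)\Rightarrow(4)$, and the classical implications you invoke ($(1)\Rightarrow$ MFMC $\Rightarrow$ packing) also point away from $(4)$; nothing in your outline shows that a clutter outside the list fails the packing property. The paper's argument here is short but essential: after discarding isolated vertices one has $\height(I_c(G))\ge 2$, so the K\"onig property for $\H$ itself forces two minimal generators of $I_c(G)$, each of degree $n-2$, with disjoint supports, whence $2(n-2)\le n$ and $n\le 4$; for $n=4$ the packing property makes $I_c(G)$ the edge ideal of a bipartite graph by \cite[Proposition 4.27]{grv2009}, which pins $G$ down to $2K_2$, $P_4$ or $C_4$. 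Your outline for $(5)\Rightarrow(1)$ and for the remaining trivial implications matches the paper and is fine.
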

	
	\begin{proof}
		The equivalence $(1)\iff (3)$ follows from \cite[Theorem 14.3.6]{RHV}. Moreover, $(1)\Rightarrow (2)$ is easy to see. Furthermore, $(5)\Rightarrow (1)$ follows from \Cref{isolated lemma} and \Cref{vertex lemma}. 
		
		To prove $(2)\Rightarrow (5)$, we take $V(\H)=\{x_1,\ldots,x_n\}$ and $I(\H)^{(k)}=I(\H)^k$. Let $G$ be the associated graph such that $I(\H)=I_c(G)$. Then we have $I_c(G)^k=I_c(G)^{(k)}$. If $|V(G)|\le 4$, then we obtain $(5)$ by using \Cref{isolated lemma} and \Cref{vertex lemma}. Therefore, we may assume that $|V(G)|\ge 5$. First consider the case when $G$ is a star graph, i.e., suppose $E(G)=\{\{x_1,x_i\}\mid i\in[n]\setminus\{1\}\}$. In this case, by \Cref{primary decomposition}, we have 
		\[
		I_c(G)=\bigcap_{\underset{r,s>1}{r\neq s}} \l x_r,x_s\r.
		\]
		Observe that $\prod_{i=2}^nx_i\in I_c(G)^{(2)}$, and if $e\in E(G)$, then $m=\x_{\bar e}^{k-2}\prod_{i=2}^nx_i\in I_c(G)^{k-2}I_c(G)^{(2)}\subseteq I_c(G)^{(k)} $, where $\deg(m)=(n-2)(k-2)+n-1$. By assumption, $m\in I_c(G)^{k}$, and hence, $k(n-2)\le \deg(m)$. Consequently, $n\le 3$, a contradiction. Thus, from now on, we may assume $G$ is not a star graph and $|V(G)|\ge 5$. Let us choose an edge $e\in E(G)$.
		\medskip 
		
		\noindent
		{\bf Claim:} $\x_{\bar e}^{k-2}\left(\prod_{i=1}^nx_i\right)\in I_c(G)^{(k)}$.
		
		\medskip 
		
		\noindent
		{\bf Proof of Claim:} Since $I_c(G)^t \subseteq I_c(G)^{(t)}$ for every $t \ge 1$, it suffices to show that 
		\[
		\prod_{i=1}^n x_i \in I_c(G)^{(2)}.
		\]
		By \Cref{primary decomposition}, we have 
		\[
		I_c(G)
		=
		\Bigg(\bigcap_{e \in E(G^c)} \p_e\Bigg)
		\;\cap\;
		\Bigg(\bigcap_{G[A] \cong K_3} \p_A\Bigg).
		\]
		Note that for each $e \in E(G^c)$ and each subset $A \subseteq V(G)$ with $G[A] \cong K_3$, there exists at least one squarefree monomial of degree $2$ in both $\p_e^2$ and $\p_A^2$. Therefore, to show that $\prod_{i=1}^n x_i \in I_c(G)^{(2)}$, it suffices to verify that for every vertex $x$ of $G$, either $x \in e$ for some $e \in E(G^c)$ or $x \in A$, where $A \subseteq V(G)$ with $G[A] \cong K_3$. This indeed holds, since $G$ is not a star graph. Hence, the proof of the claim is complete.
		
		Now, since $I_c(G)^{k} = I_c(G)^{(k)}$, we have $\x_{\bar e}^{k-2}\left(\prod_{i=1}^nx_i\right)\in I_c(G)^{k}$. However, the minimal generators of $I_c(G)^{k}$ have degree $(n-2)k$. Therefore, we must have $k(n-2)\le n+(k-2)(n-2)$. But this implies $n\le 4$, a contradiction to our hypothesis. This completes the proof of $(2)\Rightarrow (5)$.
		
		Observe that to complete the proof of the theorem, it is enough to show $(4)\iff (5)$. Now, suppose $(5)$ holds. Then $I_c(G)=mJ$ for some monomial $m$ and a monomial ideal $J$ with support disjoint from $m$, where either $J=S$ or $J$ is generated by variables, or $J$ is an edge ideal of a bipartite graph. Thus $(4)$ follows from \Cref{packing lemma} and \cite[Proposition 4.27]{grv2009}. Next, we assume $(4)$, i.e., the clutter $\H$ satisfies the packing property. Let $G$ be the associated graph of $\H$ such that $I_c(G)=I(\H)$. If $G$ has an isolated vertex $x$, consider the graph $\ha{G}=G\setminus \{x\}$. Then, $\widehat{\H}_x=\H$, and hence, by \Cref{packing lemma}, $\widehat{\H}$ also has the packing property, where $I(\widehat{\H})=I_{c}(\ha{G})$. Thus, a priori, we may assume that $G$ has no isolated vertex. Therefore, by \Cref{primary decomposition}, we have
		\[
		\height(I_{c}(G))\ge 2.
		\]
		Since $\H$ satisfies the packing property, there should exist at least two monomials $g_1,g_2\in \G(I_{c}(G))$ with disjoint support of variables. Therefore, we have $\deg(g_1)+\deg(g_2)=2n-4\leq n$, which gives $n\leq 4$.
		 For $n=4$, $I_{c}(G)$ is an edge ideal of a graph with packing property, and thus, that graph must be bipartite by \cite[Proposition 4.27]{grv2009}. Then from \Cref{fig:graphs4vertices}, one can easily verify that $G$ must be $2K_2$, $P_4$ or $C_4$. For $n<4$, it is straightforward that $I_{c}(G)$ has the packing property for any graph $G$. Hence, $G$ can be $K_2,K_3$ or $P_3$. This completes the proof of the theorem.
	\end{proof}
	
	An immediate consequence of \Cref{main theorem12345} is the following.
	
	\begin{corollary}
		Let $\H$ be an $(n-2)$-uniform clutter with $|V(\H)|=n$. Then $\H$ satisfies the Conforti–Cornu\'ejols \Cref{packing conjecture}.
	\end{corollary}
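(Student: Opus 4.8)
The plan is to obtain this as an immediate consequence of \Cref{main theorem12345}, so the proof amounts to matching the statement of the conjecture against two of the equivalences already established. Recall that the Conforti–Cornuéjols conjecture (\Cref{packing conjecture}) for a clutter $\H$ asserts the equivalence of (i) $I(\H)^{(k)} = I(\H)^{k}$ for all $k \ge 1$, and (ii) $\H$ satisfies the packing property. My first step is simply to observe that condition (i) is verbatim condition (1) of \Cref{main theorem12345}, while condition (ii) is verbatim condition (4).

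Since $\H$ is $(n-2)$-uniform, \Cref{main theorem12345} applies and yields the equivalence of its five listed conditions; in particular it gives $(1) \iff (4)$. This biconditional is exactly the equivalence $(\mathrm{i}) \iff (\mathrm{ii})$ demanded by the conjecture, so the conjecture holds for $\H$. No further argument is needed, since all the combinatorial and homological content has already been absorbed into the proof of the main theorem.

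I do not anticipate a genuine obstacle here: the hard work lies entirely in \Cref{main theorem12345}, whose proof secures $(1)\iff(4)$ by routing through the explicit graph classification in condition (5) (via $(2)\Rightarrow(5)$, $(5)\Rightarrow(1)$, and $(4)\iff(5)$). The only point requiring a moment's care is verifying that the packing property invoked in the conjecture coincides with the packing property as defined and used in \Cref{main theorem12345}; this holds by construction, as both refer to the K\"onig property of every minor. Thus the corollary follows directly.
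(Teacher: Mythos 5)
Your proposal is correct and matches the paper exactly: the paper presents this corollary as an immediate consequence of \Cref{main theorem12345}, precisely because the equivalence $(1)\iff(4)$ there is verbatim the equivalence asserted by \Cref{packing conjecture}. No further comment is needed.
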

	
	\begin{remark}\label{last remark}
		Let $G$ be a $2$-uniform clutter, i.e., a graph. Then unlike $(n-2)$-uniform clutter there may exist some integer $k\ge 2$ such that $I(G)^{(k)}=I(G)^{k}$ but $G$ fails to satisfy the MFMC property. Indeed, as an example, one can take $G=C_5$, the five-cycle, and observe that $I(G)^{(2)}=I(G)^{2}$ but $G$ does not satisfy the MFMC property, since $G$ is not bipartite \cite[cf. Proposition 4.27]{grv2009}.
	\end{remark}
	
	\subsection*{Comparision with other clutters:}
	
	In the this part of the section, we construct, for each of the following classes, $(n-2)$-uniform clutters that do not belong to that class: balanced clutters, binary clutters, dyadic clutters, and the $2$-partitionable clutters considered in \cite{FloresMendezGitlerReyes2008}. 
	Note that for each of these classes, the Conforti-Cornu\'ejols conjecture is verified, as mentioned in \Cref{intro}.

	\begin{enumerate}
		\item[$\bullet$] A clutter $C$ is \emph{balanced} if its incidence matrix contains no square submatrix of odd order with exactly two $1$'s in each row and column, that is, if $C$ has no special odd cycles \cite[Definition 6.5.15]{RHV}. 
		For $n=4$, the $2$-uniform clutter $\H$ with $V(\H)=\{x_1,x_2,x_3,x_4\}$ and edge set $E(\H)=\{x_1x_2, x_2x_3, x_1x_3, x_1x_4\}$ is not balanced, since its incidence matrix contains the submatrix corresponding to the vertex set $\{x_1,x_2,x_3\}$ with exactly two $1$'s in each row and column. 
		Similarly, for $n\ge 5$, the $(n-2)$-uniform clutter $\H$ with $V(\H)=\{x_1,\dots,x_n\}$ and edge set 
		\[
		E(\H)=\Big\{\prod_{\underset{i\neq 3}{i=1}}^{n-1}x_i, \prod_{i=2}^{n-1}x_i, \prod_{\underset{i\neq 2}{i=1}}^{n-1}x_i, \prod_{i=3}^{n}x_i\Big\}
		\]
		is not balanced, as its incidence matrix contains the submatrix on the vertex set $\{x_1,x_2,x_3\}$ with exactly two $1$'s in each row and column.
		
		\item[$\bullet$] A clutter $\H$ is said to be \emph{binary} if each edge and each minimal vertex cover of $\H$ intersect in an odd number of vertices. 
		Let $\H$ be the $(n-2)$-uniform clutter on the vertex set $V(\H)=\{x_1,\dots,x_n\}$ with edge set $\{e_i \mid i\in [n-1]\}$, where 
		\[
		e_i = \{x_1, \dots, x_{i-1}, \widehat{x_i}, \widehat{x_{i+1}}, x_{i+2}, \dots, x_n\}.
		\]
		Observe that $I(\H)$ is the complementary edge ideal of $P_n$, and by \Cref{primary decomposition}, $\{x_1,x_n\}$ is a minimal vertex cover of $\H$. 
		Since $|e_2 \cap \{x_1,x_n\}| = 2$, it follows that $\H$ is not a binary clutter.
		
		\item[$\bullet$] A clutter $\H$ is said to be \emph{dyadic} if each edge and each minimal vertex cover of $\H$ intersect in at most two vertices. 
		Let $\H$ be the $(n-2)$-uniform clutter on the vertex set $V(\H)=\{x_1, \dots, x_n\}$ with edge set 
		\[
		\{e_i,e_n \mid i \in [n-1]\},\] where  $e_i = \{x_1, \dots, x_{i-1}, \widehat{x_i}, \widehat{x_{i+1}}, x_{i+2}, \dots, x_n\}$, $e_n = \{x_1, \dots, x_{n-1}\}$. Observe that $I(\H)$ is the complementary edge ideal of the graph with edge set 
		\(\{\{x_i, x_{i+1}\}, \{x_{n-2}, x_n\} \mid i \in [n-1]\}\), and by \Cref{primary decomposition}, $\{x_{n-2}, x_{n-1}, x_n\}$ is a minimal vertex cover of $\H$. 
		Since $|e_1 \cap \{x_{n-2}, x_{n-1}, x_n\}| = 3$, it follows that $\H$ is not a dyadic clutter.
		
		\item[$\bullet$] It is clear that not every $(n-2)$-uniform clutter is a $2$-partitionable clutter of the form $Q_{pq}^F$ defined in \cite{FloresMendezGitlerReyes2008}, for which the Conforti-Cornu\'ejols conjecture is known to hold, since $|V(Q_{pq}^F)|$ is always even.
	\end{enumerate}
	
    \subsection*{Application to Linear Programming (LP) duality problems:} In the final part of this section, we present an application of our results to the theory of LP-duality problems. We begin with some observations on the incidence matrices of clutters.
	
	Let $\H$ be a clutter with $|V(\H)|=n$ and $|E(\H)|=m$. Denote by $A$ the $m\times n$ (edge–vertex) incidence matrix of $\H$. For $r\ge 1$, define a new clutter $\H_{y_1\cdots y_r}$ with vertex set $V(\H)\cup\{y_1,\ldots,y_r\}$ and edge set 
	\[
	E(\H_{y_1\cdots y_r})=\{e\cup\{y_1,\ldots,y_r\}\mid e\in E(\H)\}.
	\]
	Then the incidence matrix $\widetilde{A}_r$ of $\H_{y_1\cdots y_r}$ is the block matrix
	\begin{align}\label{incidence1}
		\widetilde{A}_r=\left(
		\begin{array}{c|c}
			A & B
		\end{array}
		\right),
	\end{align}
	of order $m\times (n+r)$, where $B$ is an $m\times r$ matrix with all entries equal to $1$. In the sequel, we shall use this description of the incidence matrix of $\H_{y_1\cdots y_r}$ to illustrate the structure of incidence matrices of all $(n-2)$-uniform clutters satisfying the MFMC property. From now on, for any $m\times n$ matrix $A$ and any non-negative integer $r$, we denote by $\widetilde{A}_r$ the matrix specified in \Cref{incidence1}, with the convention that $\widetilde{A}_0=A$.
	
	Let $G$ be one of the graphs $K_2$, $K_3$, $P_3$, $2K_2$, $P_4$, or $C_4$. Denote by $\H_G$ the clutter associated to $G$ such that $I(\H_G)=I_c(G)$. Then the incidence matrices of the clutters $\H_G$ corresponding to these graphs are as follows.

	{\footnotesize
	\begin{align}\label{matrices}
		\begin{pmatrix}
		\textcolor{white}{0}	
		\end{pmatrix}_{0\times 2},\begin{pmatrix}
		1 &0 &0\\
		0&1&0\\
		0&0&1
		\end{pmatrix}_{3\times 3},\begin{pmatrix}
		0&0&1\\
		1&0&0
		\end{pmatrix}_{2\times 3}, \begin{pmatrix}
		1&1&0&0\\
		0&0&1&1
		\end{pmatrix}_{2\times 4}, \begin{pmatrix}
		0&0&1&1\\
		1&0&0&1\\
		1&1&0&0
		\end{pmatrix}_{3\times 4}, \begin{pmatrix}
		0&0&1&1\\
		1&0&0&1\\
		1&1&0&0\\
		1&1&1&0
		\end{pmatrix}_{4\times 4},
	\end{align} 
}
where $	\begin{pmatrix}
	\textcolor{white}{0}	
\end{pmatrix}_{0\times 2}$ denotes the incidence matrix of the clutter on two vertices with no edges.

Let $M$ be any matrix of order $m\times n$. Consider the following linear programming problems:
\begin{align}\label{lpp1}
	\begin{split}
		\text{minimize } & \alpha\cdot x,\\
		\text{subject to } & Mx\ge \mathbf{1},\; x\in\mathbb{N}^n,
	\end{split}
\end{align}
and
\begin{align}\label{lpp2}
	\begin{split}
		\text{maximize } & y\cdot \mathbf{1},\\
		\text{subject to } & yM\le \alpha,\; y\in\mathbb{N}^n,
	\end{split}
\end{align}
where $\mathbf{1}$ is the column vector whose entries are all equal to $1$, and $\alpha$ is a nonnegative integral vector.  
Let $\varphi_{\alpha}(M)$ and $\psi_{\alpha}(M)$ denote the optimal values of the linear programming problems in \Cref{lpp1} and \Cref{lpp2}, respectively.  
As an application of \Cref{main theorem12345}, we now provide an explicit solution to the following linear programming problem.
	
	\begin{theorem}\label{lpp theorem}
		Let $M$ be any $m\times n$ matrix with entries $0$ and $1$. If each row sum of $M$ is $n-2$, then for each $\alpha\in\mathbb N^n$, $\varphi_{\alpha}(M)=\psi_{\alpha}(M)$ if and only if $n\le 4$, and $M=\widehat{A}_r$, where $A$ is any of the matrices in \Cref{matrices} and $r$ is a non-negative integer.
	\end{theorem}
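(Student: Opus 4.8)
The plan is to read the hypothesis matricially and reduce the statement to the equivalence $(3)\Leftrightarrow(5)$ of \Cref{main theorem12345}. Since every row sum of $M$ equals $n-2$, each row is the indicator vector of an $(n-2)$-element subset of $\{1,\dots,n\}$, so (after deleting repeated rows, which alters neither $\varphi_\alpha$ nor $\psi_\alpha$: a repeated covering inequality in \Cref{lpp1} is redundant, while repeated packing variables in \Cref{lpp2} may be amalgamated) we may regard $M$ as the edge--vertex incidence matrix of an $(n-2)$-uniform clutter $\H$; note that the listed matrices and their augmentations have distinct rows, so this identification is harmless. By the correspondence recalled in \Cref{section 2}, $I(\H)=I_c(G)$ for an associated graph $G$.

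The first genuine step is to establish the dictionary that $\varphi_\alpha(M)=\psi_\alpha(M)$ for every $\alpha\in\mathbb N^n$ is equivalent to $\H$ having the MFMC property. By LP duality applied to the continuous relaxations of \Cref{lpp1} and \Cref{lpp2}, the two optima share a common value $v_\alpha$; since relaxing integrality can only lower the covering minimum and raise the packing maximum, one always has the sandwich $\varphi_\alpha(M)\ge v_\alpha\ge\psi_\alpha(M)$. Consequently $\varphi_\alpha(M)=\psi_\alpha(M)$ forces both integer programs to attain $v_\alpha$, that is, both \Cref{lpp1} and \Cref{lpp2} admit integral optimal solutions; imposing this for every $\alpha$ is exactly the MFMC property of $\H$ as defined in \Cref{intro}.

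With these reductions in hand, I would invoke \Cref{main theorem12345}: the equality holds for all $\alpha$ iff $\H$ is MFMC iff $G$ is one of $K_2,K_3,P_3,2K_2,P_4,C_4$ together with (possibly) some isolated vertices. It then remains to translate this description into the matrix form $M=\widehat{A}_r$. The key observation—already implicit in the construction $\H_{y_1\cdots y_r}$ of \Cref{section 2} and in \Cref{isolated lemma}—is that a vertex $x$ is isolated in $G$ precisely when $x\in\bar e$ for every edge $e\in E(G)$, i.e.\ precisely when the corresponding column of $M$ is all ones. Thus deleting the $r$ all-ones columns (the $r$ isolated vertices) leaves the complementary--edge incidence matrix of a graph $G_0\in\{K_2,K_3,P_3,2K_2,P_4,C_4\}$ on at most four vertices, so that the unaugmented part has at most four columns, which is the meaning of the condition $n\le 4$ on the base in the statement; a graph-by-graph check identifies this reduced matrix with one of the six matrices $A$ in \Cref{matrices}. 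Hence $M=\widehat{A}_r$, and conversely any such $M$ is the incidence matrix of the corresponding clutter and satisfies the equality by the same chain of equivalences.

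I expect the main obstacle to lie not in the combinatorics but in making the integrality-gap argument airtight: one must confirm that equality of the two \emph{integer}-programming values is genuinely equivalent to the MFMC property of \Cref{intro} (integrality of \emph{both} sides), rather than to the a priori weaker integrality of only one side. The sandwich $\varphi_\alpha\ge v_\alpha\ge\psi_\alpha$ renders this precise. The remaining tasks—verifying the six matrices in \Cref{matrices} against the complementary--edge ideals of the six base graphs, and tracking how the all-ones columns encode the $\widehat{(\cdot)}_r$ augmentation via \Cref{packing lemma}—are routine bookkeeping.
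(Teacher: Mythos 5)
Your proof follows essentially the same route as the paper's, which simply observes that $M$ is the incidence matrix of an $(n-2)$-uniform clutter and invokes \Cref{main theorem12345} together with ``the discussion above''; your elaboration of the LP-duality sandwich $\varphi_\alpha(M)\ge v_\alpha\ge\psi_\alpha(M)$ and of the dictionary between all-ones columns and isolated vertices merely makes explicit what the paper leaves implicit, and is correct. The only point of divergence is that you (sensibly) reinterpret the condition $n\le 4$ as a bound on the unaugmented block $A$ rather than on $M$ itself, since the literal reading of the statement is incompatible with taking $r\ge 1$.
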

	\begin{proof}
		As each row sum of $M$ is $n-2$, we see that $M$ is the incidence matrix of an $(n-2)$-uniform clutter. Thus, the result follows from \Cref{main theorem12345} and the discussion above.
	\end{proof}
	
	\section*{Acknowledgements}
	The authors are deeply grateful to Rafael Villarreal for his insightful comments and suggestions, which greatly improved the quality of this work. They also thank Antonino Ficarra and Mehrdad Nasernejad for their valuable feedback. Roy acknowledges support from a Postdoctoral Fellowship at the Chennai Mathematical Institute and a grant from the Infosys Foundation.

	\subsection*{Data availability statement} Data sharing does not apply to this article as no new data were created or analyzed in this study.
	
	\subsection*{Conflict of interest} The authors declare that they have no known competing financial interests or personal relationships that could have appeared to influence the work reported in this paper.

	\bibliographystyle{abbrv}
	\bibliography{ref}
\end{document}